\numberwithin{equation}{section}
\newtheorem{theorem}{Theorem}[section]
\theoremstyle{plain}
\newtheorem{corollary}[theorem]{Corollary}
\newtheorem{lemma}{Lemma}[section]
\numberwithin{equation}{section}
\begin{document}
\title[Second Hankel determinant for bi-starlike and bi-convex functions of
order $\beta $]{Second Hankel determinant for bi-starlike and bi-convex
functions of order $\beta $}
\author{Erhan Deniz}
\author{Murat Ça\u{g}lar}
\address{Department of Mathematics, Faculty of Science and Letters, Kafkas
University, Kars, Turkey.}
\email{edeniz36@gmail.com (Erhan Deniz), mcaglar25@gmail.com (Murat Ça\u{g}%
lar)}
\author{Halit Orhan}
\address{Department of Mathematics, Faculty of Science, Ataturk University,
Erzurum, 25240, Turkey.}
\email{orhanhalit607@gmail.com (Halit Orhan)}
\subjclass[2000]{ Primary 30C45, 30C50; Secondary 30C80.}
\keywords{Bi-univalent functions, bi-starlike functions of order $\beta $,
bi-convex functions of order $\beta $, second Hankel determinant. \\
\textit{Corresponding author}. edeniz36@gmail.com (Erhan Deniz)}

\begin{abstract}
In the present investigation the authors obtain upper bounds for the second
Hankel determinant $H_{2}(2)$ of the classes bi-starlike and bi-convex
functions of order $\beta $, represented by $S_{\sigma }^{\ast }(\beta )\;$%
and $K_{\sigma }(\beta )$, respectively. In particular, the estimates for
the second Hankel determinant $H_{2}(2)$ of bi-starlike and bi-convex
functions which are important subclasses of bi-univalent functions are
pointed out.
\end{abstract}

\maketitle

\section{Introduction and definitions}

Let $\mathcal{A}$ denote the family of functions $f$\ analytic in the open
unit disk $\mathcal{U}=\left\{ {z\in 
\mathbb{C}
:\left\vert z\right\vert <1}\right\} $ of the form 
\begin{equation}
f(z)=z+\sum\limits_{n=2}^{\infty }{a_{n}z^{n}.}  \label{eq1}
\end{equation}%
Let $\mathcal{S}$ denote the class of all functions in $\mathcal{A}$ which
are univalent in $\mathcal{U}$. The Koebe one-quarter theorem (see \cite%
{Duren 1}) ensures that the image of $\mathcal{U}$ under every $f\in 
\mathcal{S}$ contain a disk of radius $1\diagup 4.$ So, every $f\in \mathcal{%
S}$ has an inverse function $f^{-1}$ satisfying $f^{-1}(f(z))=z$ $\left(
z\in \mathcal{U}\right) $ and%
\begin{equation*}
f(f^{-1}(w))=w\text{ \ \ }\left( \left\vert w\right\vert <r_{0}(f);\text{ }%
r_{0}(f)\geq 1\diagup 4\right)
\end{equation*}%
where $%
f^{-1}(w)=w-a_{2}w^{2}+(2a_{2}^{2}-a_{3})w^{3}-(5a_{2}^{3}-5a_{2}a_{3}+a_{4})w^{4}+.... 
$

A function $f\in \mathcal{A}$ is said to be bi-univalent in $\mathcal{U}$ if
both $f(z)$ and $f^{-1}(z)$ are univalent in $\mathcal{U}.$ Let $\sigma $
denote the class of bi-univalent functions in $\mathcal{U}$ given by (\ref%
{eq1}).

Two of the most famous subclasses of univalent functions are the class $%
\mathcal{S}^{\ast }(\beta )$ of starlike functions of order $\beta $ and the
class $\mathcal{K}(\beta )\;$of convex functions of order $\beta $. By
definition, we have%
\begin{equation*}
\mathcal{S}^{\ast }(\beta )=\left\{ f\in \mathcal{S}:\Re \left( \frac{%
zf^{\prime }(z)}{f(z)}\right) >\beta ;\;z\in \mathcal{U};\;0\leq \beta
<1\right\}
\end{equation*}%
and%
\begin{equation*}
\mathcal{K}(\beta )=\left\{ f\in \mathcal{S}:\Re \left( 1+\frac{zf^{\prime
\prime }(z)}{f^{\prime }(z)}\right) >\beta ;\;z\in \mathcal{U};\;0\leq \beta
<1\right\} .
\end{equation*}%
The classes consisting of starlike and convex functions are usually denoted
by $\mathcal{S}^{\ast }=\mathcal{S}^{\ast }(0)$ and $\mathcal{K}=\mathcal{K}%
(0)$, respectively.

For $0\leq \beta <1,$ a function $f\in \sigma $ is in the class $\mathcal{S}%
_{\sigma }^{\ast }(\beta )$ of \textit{bi-starlike functions of order} $%
\beta ,$ or $\mathcal{K}_{\sigma }(\beta )$ of \textit{bi-convex functions
of order} $\beta $ if both $f$ and its inverse map $f^{-1}$ are,
respectively, starlike or convex of order $\beta .$ These classes were
introduced by Brannan and Taha \cite{Bran} in 1985. Especially the classes $%
\mathcal{S}_{\sigma }^{\ast }(0)=\mathcal{S}_{\sigma }^{\ast }$ and $%
\mathcal{K}_{\sigma }(0)=\mathcal{K}_{\sigma }$ are \textit{bi-starlike} and 
\textit{bi-convex functions}, respectively. In 1967, Lewin \cite{Lew} showed
that for every functions $f\in \sigma $ of the form (\ref{eq1}), the second
coefficient of $f$ satisfy the inequality $|a_{2}|<1.51$. In 1967, Brannan
and Clunie \cite{Br1} conjectured that $|a_{2}|\leq \sqrt{2}$ for $f\in
\sigma .$ Later, Netanyahu \cite{Ne} proved that $\max_{f\in \sigma
}|a_{2}|=4/3$. \ In 1985, Kedzierawski \cite{Ked} proved Brannan and
Clunie's conjecture for $f\in \mathcal{S}_{\sigma }^{\ast }$. In 1985, Tan 
\cite{Tan} obtained the bound for $a_{2}$ namely $|a_{2}|<1.485$ which is
the best known estimate for functions in the class $\sigma .$ Brannan and
Taha \cite{Bran} obtained estimates on the initial coefficients $\left\vert
a_{2}\right\vert $ and $\left\vert a_{3}\right\vert $ for functions in the
classes $\mathcal{S}_{\sigma }^{\ast }(\beta )$ and $\mathcal{K}_{\sigma
}(\beta )$. Recently, Deniz \cite{Deniz} and Kumar et al. \cite{Ku} both
extended and improved the results of Brannan and Taha \cite{Bran} by
generalizing their classes using subordination. The problem of estimating
coefficients $\left\vert a_{n}\right\vert $, $n\geq 2$ is still open.
However, a lot of results for $\left\vert a_{2}\right\vert $, $\left\vert
a_{3}\right\vert $ and $\left\vert a_{4}\right\vert $ were proved for some
subclasses of $\sigma $ (see \cite{Bul}, \cite{CaOrNi}, \cite{Fra}, \cite%
{Jahan}, \cite{Or2}, \cite{Sri}, \cite{Sri2}, \cite{Qing1}, \cite{Qing2}).
Unfortunatelly, none of them are not sharp.

One of the important tools in the theory of univalent functions is Hankel
Determinants which are utility, for example, in showing that a function of
bounded characteristic in $\mathcal{U}$, i.e., a function which is a ratio
of two bounded analytic functions, with its Laurent series around the origin
having integral coefficients, is rational \cite{Cantor}. The Hankel
determinants \cite{No-Tho}$\;H_{q}(n)\;(n=1,2,...,\;q=1,2,...)$ of the
function $f$\ are defined by%
\begin{equation*}
H_{q}(n)=\left[ 
\begin{array}{cccc}
a_{n} & a_{n+1} & ... & a_{n+q-1} \\ 
a_{n+1} & a_{n+2} & ... & a_{n+q} \\ 
\vdots & \vdots &  & \vdots \\ 
a_{n+q-1} & a_{n+q} & ... & a_{n+2q-2}%
\end{array}%
\right] \text{ \ \ }(a_{1}=1).
\end{equation*}

This determinant was discussed by several authors with $q=2$. For example,
we can know that the functional $H_{2}(1)=a_{3}-a_{2}^{2}\;$is known as the
Fekete-Szegö functional and they consider the further generalized functional 
$a_{3}-\mu a_{2}^{2}\;$where $\mu $ is some real number (see, \cite{Fekete}%
). In 1969, Keogh and Merkes \cite{ke} proved the Fekete-Szegö problem for
the classes $\mathcal{S}^{\ast }$ and $\mathcal{K}$. Someone can see the
Fekete-Szegö problem for the classes $\mathcal{S}^{\ast }(\beta )$ and $%
\mathcal{K}(\beta )$ at special cases in the paper of Orhan \textit{et.al. }%
\cite{Or}. On the other hand, very recently Zaprawa \cite{Za}, \cite{Za1}
have studied on Fekete-Szegö problem for some classes of bi-univalent
functions. In special cases, he gave Fekete-Szegö problem for the classes $%
\mathcal{S}_{\sigma }^{\ast }(\beta )$ and $\mathcal{K}_{\sigma }(\beta )$.
In 2014, Zaprawa \cite{Za} proved the following resuts for $\mu \in 
\mathbb{R}
,$%
\begin{equation*}
f\in \mathcal{S}_{\sigma }^{\ast }(\beta )\Rightarrow \left\vert a_{3}-\mu
a_{2}^{2}\right\vert \leq \left\{ 
\begin{array}{cc}
1-\beta ; & \frac{1}{2}\leq \mu \leq \frac{3}{2} \\ 
2(1-\beta )\left\vert \mu -1\right\vert ; & \mu \geq \frac{3}{2}\text{ and }%
\mu \leq \frac{1}{2}%
\end{array}%
\right.
\end{equation*}%
and%
\begin{equation*}
f\in \mathcal{K}_{\sigma }(\beta )\Rightarrow \left\vert a_{3}-\mu
a_{2}^{2}\right\vert \leq \left\{ 
\begin{array}{cc}
\frac{1-\beta }{3}; & \frac{2}{3}\leq \mu \leq \frac{4}{3} \\ 
(1-\beta )\left\vert \mu -1\right\vert ; & \mu \geq \frac{4}{3}\text{ and }%
\mu \leq \frac{2}{3}%
\end{array}%
\right. .
\end{equation*}

The second Hankel determinant $H_{2}(2)$ is given by $%
H_{2}(2)=a_{2}a_{4}-a_{3}^{2}.$ The bounds for the second Hankel determinant 
$H_{2}(2)$ obtained for the classes $\mathcal{S}^{\ast }$ and $\mathcal{K}$
in \cite{Ja}. Recently, Lee et al. \cite{Lee} established the sharp bound to 
$\left\vert H_{2}(2)\right\vert $ by generalizing their classes using
subordination. In their paper, one can find the sharp bound to $\left\vert
H_{2}(2)\right\vert $ for the functions in the classes $\mathcal{S}^{\ast
}(\beta )$ and $\mathcal{K}(\beta )$.

In this paper, we seek upper bound for the functional $%
H_{2}(2)=a_{2}a_{4}-a_{3}^{2}\;$for functions $f\;$belonging to the classes $%
\mathcal{S}_{\sigma }^{\ast }(\beta )\;$and $\mathcal{K}_{\sigma }(\beta )$.

Let $\mathcal{P}$ be the class of functions with positive real part
consisting of all analytic functions $\mathcal{P}:\mathcal{U\rightarrow 
\mathbb{C}
}$ satisfying $p(0)=1$ and $\Re p(z)>0$.

To establish our main results, we shall require the following lemmas.

\begin{lemma}
\label{l1}\cite{Pom} If the function $p\in \mathcal{P}$ is given by the
series 
\begin{equation}
p(z)=1+c_{1}z+c_{2}z^{2}+...  \label{eq1*}
\end{equation}%
then the sharp estimate $\left\vert c_{k}\right\vert \leq 2$ $(k=1,2,...)$
holds.
\end{lemma}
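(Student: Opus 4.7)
The plan is to deduce the estimate from the Herglotz representation of functions with positive real part. Recall that every $p\in\mathcal{P}$ admits a representation
\[
p(z)=\int_{|x|=1}\frac{1+xz}{1-xz}\,d\mu(x),
\]
where $\mu$ is a probability measure on the unit circle $\{|x|=1\}$. This is the key external ingredient; I would cite it and not reprove it.

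Next I would expand the Herglotz kernel as a geometric series. Since $|xz|<1$ for $z\in\mathcal{U}$ and $|x|=1$, we have
\[
\frac{1+xz}{1-xz}=1+2\sum_{k=1}^{\infty}(xz)^{k},
\]
and this series converges uniformly on compact subsets of $\mathcal{U}$. Substituting into the Herglotz formula and interchanging the sum and the integral (justified by the uniform convergence) gives
\[
p(z)=1+\sum_{k=1}^{\infty}\left(2\int_{|x|=1}x^{k}\,d\mu(x)\right)z^{k}.
\]
Comparing with the expansion \eqref{eq1*} identifies the Taylor coefficients as $c_{k}=2\int_{|x|=1}x^{k}\,d\mu(x)$.

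Finally, applying the triangle inequality for integrals together with $|x|^{k}=1$ and $\mu$ being a probability measure yields
\[
|c_{k}|\leq 2\int_{|x|=1}|x|^{k}\,d\mu(x)=2\int_{|x|=1}d\mu(x)=2,
\]
which is the desired bound. Sharpness would be recorded by observing that the choice $p(z)=(1+z)/(1-z)$, corresponding to $\mu=\delta_{1}$, produces $c_{k}=2$ for every $k\geq 1$.

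The only delicate point is the appeal to the Herglotz representation; everything else is a routine geometric-series expansion followed by the triangle inequality. Since the lemma is attributed to Pommerenke in the reference list, I would be brief and cite the representation rather than reconstruct it.
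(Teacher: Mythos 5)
Your proof is correct. The paper itself gives no proof of this lemma --- it is simply quoted from Pommerenke --- and the Herglotz-representation argument you outline is the standard one found there: the coefficient identity $c_{k}=2\int_{|x|=1}x^{k}\,d\mu(x)$ with $\mu$ a probability measure immediately gives $|c_{k}|\leq 2$, and $p(z)=(1+z)/(1-z)$ shows sharpness. The only remark worth adding is that one can avoid invoking the Herglotz theorem altogether by the even more elementary Carath\'eodory argument: from
\begin{equation*}
c_{k}r^{k}=\frac{1}{\pi}\int_{0}^{2\pi}\Re\, p(re^{i\theta})\,e^{-ik\theta}\,d\theta ,\qquad 0<r<1,
\end{equation*}
together with $\Re\, p>0$ and $\frac{1}{2\pi}\int_{0}^{2\pi}\Re\, p(re^{i\theta})\,d\theta=\Re\, p(0)=1$, one gets $|c_{k}|r^{k}\leq 2$ and lets $r\to 1^{-}$; this trades the representation theorem for the mean-value property of harmonic functions. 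Either route is acceptable for a cited classical lemma.
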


\begin{lemma}
\label{l2}\cite{Grenan} If the function $p\in \mathcal{P}$ is given by the
series (\ref{eq1*}), then%
\begin{eqnarray}
2c_{2} &=&c_{1}^{2}+x(4-c_{1}^{2})  \label{eq2} \\
4c_{3}
&=&c_{1}^{3}+2(4-c_{1}^{2})c_{1}x-c_{1}(4-c_{1}^{2})x^{2}+2(4-c_{1}^{2})%
\left( 1-\left\vert x\right\vert ^{2}\right) z,  \label{eq3}
\end{eqnarray}%
for some $x,$ $z$ with $\left\vert x\right\vert \leq 1$ and $\left\vert
z\right\vert \leq 1.$
\end{lemma}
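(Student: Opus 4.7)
The plan is to derive both identities by passing from $p$ to its associated Schur function and iterating the Schur--Carath\'{e}odory reduction; throughout I use $\zeta\in\mathcal{U}$ as the variable, reserving $z$ for the parameter in the statement. Setting $\omega(\zeta)=(p(\zeta)-1)/(p(\zeta)+1)$, the hypotheses $\Re p>0$ and $p(0)=1$ make $\omega$ a Schur function with $\omega(0)=0$. Expanding $\omega(\zeta)=\gamma_{1}\zeta+\gamma_{2}\zeta^{2}+\gamma_{3}\zeta^{3}+\cdots$ in the geometric series $p=1+2\omega+2\omega^{2}+\cdots$ yields $c_{1}=2\gamma_{1}$, $c_{2}=2\gamma_{2}+2\gamma_{1}^{2}$, and $c_{3}=2\gamma_{3}+4\gamma_{1}\gamma_{2}+2\gamma_{1}^{3}$. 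By the rotation $p(\zeta)\mapsto p(e^{i\theta}\zeta)$ I may assume $c_{1}\in[0,2]$, so $\gamma_{1}=c_{1}/2$ is real.

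The second step is to apply the Schur algorithm twice. Since $\omega(\zeta)/\zeta$ is Schur with value $\gamma_{1}$ at the origin, Schwarz--Pick provides a Schur function $\omega_{1}$ with
\[
\omega(\zeta)=\zeta\cdot\frac{\gamma_{1}+\zeta\,\omega_{1}(\zeta)}{1+\gamma_{1}\zeta\,\omega_{1}(\zeta)}.
\]
Setting $x=\omega_{1}(0)$ with $|x|\le 1$, the coefficient of $\zeta^{2}$ in this expansion is $\gamma_{2}=(1-\gamma_{1}^{2})x=x(4-c_{1}^{2})/4$, which substituted into $c_{2}=2\gamma_{2}+2\gamma_{1}^{2}$ produces the first identity. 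Applying the reduction once more to $\omega_{1}$ furnishes $\omega_{1}(\zeta)=(x+\zeta\,\omega_{2}(\zeta))/(1+\bar{x}\zeta\,\omega_{2}(\zeta))$ with $\omega_{2}$ Schur, so that (writing $z=\omega_{2}(0)$, $|z|\le 1$) the coefficient of $\zeta$ in $\omega_{1}$ equals $(1-|x|^{2})z$. Comparing coefficients of $\zeta^{3}$ in $\omega$ and simplifying gives
\[
\gamma_{3}=(1-\gamma_{1}^{2})\bigl((1-|x|^{2})z-\gamma_{1}x^{2}\bigr);
\]
inserting $\gamma_{1}$, $\gamma_{2}$, $\gamma_{3}$ into $c_{3}=2\gamma_{3}+4\gamma_{1}\gamma_{2}+2\gamma_{1}^{3}$ and clearing a factor of $4$ yields the second identity.

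The analytic content is minimal, reducing to two invocations of Schwarz--Pick (equivalently, the Schur--Cohn step). The main obstacle is the bookkeeping in the final step: one must expand a nested Schur parametrisation to order $\zeta^{3}$ and verify that the four contributions to $4c_{3}$ collect with exactly the claimed terms $c_{1}^{3}$, $2(4-c_{1}^{2})c_{1}x$, $-c_{1}(4-c_{1}^{2})x^{2}$, and $2(4-c_{1}^{2})(1-|x|^{2})z$, with the correct signs.
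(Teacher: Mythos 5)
The paper offers no proof of this lemma at all --- it is simply quoted from \cite{Grenan} (the explicit parametric form is usually credited to Libera and Z\l{}otkiewicz) --- so your Schur-algorithm derivation is a self-contained substitute for the citation rather than a variant of an argument in the text. It is correct: with $\gamma_{1}=c_{1}/2$, $\gamma_{2}=(1-\gamma_{1}^{2})x$ and $\gamma_{3}=(1-\gamma_{1}^{2})\bigl((1-|x|^{2})z-\gamma_{1}x^{2}\bigr)$, the relations $c_{2}=2\gamma_{2}+2\gamma_{1}^{2}$ and $c_{3}=2\gamma_{3}+4\gamma_{1}\gamma_{2}+2\gamma_{1}^{3}$ do collapse exactly to (\ref{eq2}) and (\ref{eq3}); I verified the order-$\zeta^{3}$ bookkeeping you deferred, and the coefficient of $\zeta^{2}$ in $(\gamma_{1}+\zeta\omega_{1})/(1+\gamma_{1}\zeta\omega_{1})$ is indeed $(1-\gamma_{1}^{2})\bigl((1-|x|^{2})z-\gamma_{1}x^{2}\bigr)$. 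Two caveats are worth recording. First, the rotation $p(\zeta)\mapsto p(e^{i\theta}\zeta)$ sends $c_{k}\mapsto e^{ik\theta}c_{k}$, under which $4-c_{1}^{2}$ is not equivariant, so this normalisation does not literally recover the statement for complex $c_{1}$; without it your own computation produces $4-|c_{1}|^{2}$ and $\bar{\gamma}_{1}$ where the lemma prints $4-c_{1}^{2}$ and $\gamma_{1}$. This is really a defect of the lemma as stated in the literature rather than of your proof --- the paper only ever applies it with $c_{1}=c\in[0,2]$ --- but you should say explicitly that you prove the real-$c_{1}$ case, which is the one used. Second, the two Schwarz--Pick reductions require $|\gamma_{1}|<1$ and $|x|<1$ respectively; the degenerate cases $c_{1}=\pm 2$ (where $\omega(\zeta)=\gamma_{1}\zeta$, so $4-c_{1}^{2}=0$ and the identities reduce to $2c_{2}=c_{1}^{2}$, $4c_{3}=c_{1}^{3}$) and $|x|=1$ (where the $z$-term vanishes and any $z$ works) should be disposed of in a sentence. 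With those two remarks added, the argument is complete and, unlike the paper, actually proves the lemma it relies on.
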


\section{Main results}

Our first main result for the class $\mathcal{S}_{\sigma }^{\ast }(\beta )$
as follows:

\begin{theorem}
\label{t1} Let $f(z)$ given by (\ref{eq1}) be in the class $\mathcal{S}%
_{\sigma }^{\ast }(\beta ),$ $0\leq \beta <1.$ Then%
\begin{equation}
\left\vert a_{2}a_{4}-a_{3}^{2}\right\vert \leq \left\{ 
\begin{array}{cc}
\frac{4}{3}\left( 1-\beta \right) ^{2}\left( 4\beta ^{2}-8\beta +5\right) ,
& \beta \in \left[ 0,\frac{29-\sqrt{137}}{32}\right] \\ 
\left( 1-\beta \right) ^{2}\left( \frac{13\beta ^{2}-14\beta -7}{16\beta
^{2}-26\beta +5}\right) , & \beta \in \left( \frac{29-\sqrt{137}}{32}%
,1\right) .%
\end{array}%
\right.  \label{eq5}
\end{equation}
\end{theorem}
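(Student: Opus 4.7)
The plan is to translate bi-starlikeness into coefficient identities, reduce $H_{2}(2)=a_{2}a_{4}-a_{3}^{2}$ to quantities built from the Carath\'{e}odory coefficients of two auxiliary functions, apply the parametrization of Lemma~\ref{l2}, and then maximize the resulting polynomial on a compact box.

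First, since $f\in\mathcal{S}_{\sigma}^{\ast}(\beta)$, there exist $p,q\in\mathcal{P}$ with $p(z)=1+\sum_{n\ge 1}c_{n}z^{n}$ and $q(w)=1+\sum_{n\ge 1}d_{n}w^{n}$ such that
\begin{equation*}
\frac{zf'(z)}{f(z)}=\beta+(1-\beta)p(z),\qquad \frac{wg'(w)}{g(w)}=\beta+(1-\beta)q(w),\quad g=f^{-1}.
\end{equation*}
Matching the first four coefficients in each identity, and inserting the expansion of $f^{-1}$ recalled in the introduction, yields $a_{2}=(1-\beta)c_{1}$, forces the constraints $d_{1}=-c_{1}$ and $c_{2}+d_{2}=2(1-\beta)c_{1}^{2}$, and expresses $a_{3}$ and $a_{4}$ through $c_{1}$, $c_{2}-d_{2}$, and $c_{3}-d_{3}$ alone. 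Substituting these formulas into $a_{2}a_{4}-a_{3}^{2}$ gives a clean expression in just these three quantities.

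Second, I would apply Lemma~\ref{l2} to $p$ (with auxiliary parameters $x,\zeta$) and to $q$ (with $y,\eta$), using $d_{1}=-c_{1}$. After rotation we may assume $c:=c_{1}\in[0,2]$. The triangle inequality together with $|\zeta|,|\eta|\le 1$ yields a polynomial upper bound
\begin{equation*}
|a_{2}a_{4}-a_{3}^{2}|\le F_{\beta}(c,\rho,\tau),\qquad \rho=|x|,\;\tau=|y|,
\end{equation*}
on the box $[0,2]\times[0,1]^{2}$, with $F_{\beta}$ symmetric in $(\rho,\tau)$. Factoring $\partial_{\rho}F_{\beta}$, one checks that it remains non-negative on $[0,1]$ for every fixed $c\in[0,2]$ and $\tau\in[0,1]$ (and similarly for $\partial_{\tau}F_{\beta}$), so the maximum is attained at the corner $\rho=\tau=1$. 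Establishing this monotonicity uniformly in $c$ and $\beta$ is the step I expect to require the most care, since the relevant derivative mixes terms of opposite sign coming from the $(1-|x|^{2})$ remainder in Lemma~\ref{l2}.

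Third, setting $t=c^{2}\in[0,4]$, the restricted function $\Phi_{\beta}(c):=F_{\beta}(c,1,1)$ becomes a quadratic in $t$,
\begin{equation*}
\Phi_{\beta}=A(\beta)\,t^{2}+B(\beta)\,t+C(\beta),
\end{equation*}
with $A(\beta)=\tfrac{(1-\beta)^{2}(16\beta^{2}-26\beta+5)}{48}$, $B(\beta)=\tfrac{(1-\beta)^{2}(2-\beta)}{2}$ and $C(\beta)=(1-\beta)^{2}$. A short computation gives $\Phi_{\beta}'(4)=\tfrac{(1-\beta)^{2}}{6}(16\beta^{2}-29\beta+11)$, whose only root in $[0,1)$ is $\beta_{0}=(29-\sqrt{137})/32$. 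For $\beta\in[0,\beta_{0}]$ one has $\Phi_{\beta}'(4)\ge 0$, so the maximum lies at the endpoint $t=4$ and evaluates to $\tfrac{4(1-\beta)^{2}(4\beta^{2}-8\beta+5)}{3}$; for $\beta\in(\beta_{0},1)$ the quadratic is strictly concave and the interior critical point $t^{\ast}=-B/(2A)\in(0,4)$ yields the maximum $(1-\beta)^{2}\cdot\tfrac{13\beta^{2}-14\beta-7}{16\beta^{2}-26\beta+5}$. The two expressions agree at $\beta_{0}$, confirming continuity and reproducing the bound~(\ref{eq5}).
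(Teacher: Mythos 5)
Your proposal is correct and follows essentially the same route as the paper: the same coefficient identities from $zf'/f$ and $wg'/g$, the same use of Lemma~\ref{l2} with parameters $x,y,z,w$, the same triangle-inequality bound maximized at the corner $\rho=\tau=1$ of the box, and the same single-variable optimization (your substitution $t=c^{2}$ and the criterion $\Phi_{\beta}'(4)\ge 0$ merely streamline the paper's case analysis on the sign of $16\beta^{2}-26\beta+5$ and the location of the critical point $c_{0_{2}}$). The monotonicity step you flag as delicate does go through, exactly as in the paper, because $T_{2}+2(T_{3}+T_{4})\ge 0$ on $[0,2]$.
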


\begin{proof}
Let $f\in \mathcal{S}_{\sigma }^{\ast }(\beta )\;$and $g=f^{-1}.$\ Then%
\begin{equation}
\frac{zf^{\prime }(z)}{f(z)}=\beta +(1-\beta )p(z)\;\text{and }\frac{%
wg^{\prime }(w)}{g(w)}=\beta +(1-\beta )q(w)  \label{eq6}
\end{equation}%
where $p(z)=1+c_{1}z+c_{2}z^{2}+...$ and $q(w)=1+d_{1}w+d_{2}w^{2}+...$ in $%
\mathcal{P}$.

Comparing coefficients in (\ref{eq6}), we have%
\begin{eqnarray}
a_{2} &=&(1-\beta )c_{1},  \label{eq8} \\
2a_{3}-a_{2}^{2} &=&(1-\beta )c_{2},  \label{eq9} \\
3a_{4}-3a_{3}a_{2}+a_{2}^{3} &=&(1-\beta )c_{3}  \label{eq10}
\end{eqnarray}%
and%
\begin{eqnarray}
-a_{2} &=&(1-\beta )d_{1},  \label{eq8*} \\
3a_{2}^{2}-2a_{3} &=&(1-\beta )d_{2},  \label{eq9*} \\
-10a_{2}^{3}+12a_{3}a_{2}-3a_{4} &=&(1-\beta )d_{3}.  \label{eq10*}
\end{eqnarray}%
From (\ref{eq8}) and (\ref{eq8*}), we arrive at 
\begin{equation}
c_{1}=-d_{1}  \label{eq11}
\end{equation}%
and%
\begin{equation}
a_{2}=(1-\beta )c_{1}.  \label{eq12}
\end{equation}

Now, from (\ref{eq9}), (\ref{eq9*}) and (\ref{eq12}), we get that%
\begin{equation}
a_{3}=\left( 1-\beta \right) ^{2}c_{1}^{2}+\frac{\left( 1-\beta \right) }{4}%
\left( c_{2}-d_{2}\right) .  \label{eq13}
\end{equation}

Also, from (\ref{eq10}) and (\ref{eq10*}), we find that%
\begin{equation}
a_{4}=\frac{2}{3}\left( 1-\beta \right) ^{3}c_{1}^{3}+\frac{5}{8}\left(
1-\beta \right) ^{2}c_{1}\left( c_{2}-d_{2}\right) +\frac{1}{6}(1-\beta
)\left( c_{3}-d_{3}\right) .  \label{eq14}
\end{equation}%
Thus, we can easily establish that%
\begin{eqnarray}
&&\left\vert a_{2}a_{4}-a_{3}^{2}\right\vert =\left\vert -\frac{1}{3}\left(
1-\beta \right) ^{4}c_{1}^{4}+\frac{1}{8}\left( 1-\beta \right)
^{3}c_{1}^{2}\left( c_{2}-d_{2}\right) \right.  \notag \\
&&\text{ \ \ \ \ \ \ \ \ \ \ \ \ \ \ }\left. +\frac{1}{6}\left( 1-\beta
\right) ^{2}c_{1}\left( c_{3}-d_{3}\right) -\frac{1}{16}\left( 1-\beta
\right) ^{2}\left( c_{2}-d_{2}\right) ^{2}\right\vert .  \label{eq15}
\end{eqnarray}%
According to Lemma \ref{l2} and (\ref{eq11}), we write%
\begin{equation}
\left. 
\begin{array}{c}
2c_{2}=c_{1}^{2}+x(4-c_{1}^{2}) \\ 
2d_{2}=d_{1}^{2}+x(4-d_{1}^{2})%
\end{array}%
\right\} \Longrightarrow c_{2}-d_{2}=\frac{4-c_{1}^{2}}{2}(x-y)  \label{eq16}
\end{equation}%
and%
\begin{eqnarray*}
4c_{3}
&=&c_{1}^{3}+2(4-c_{1}^{2})c_{1}x-c_{1}(4-c_{1}^{2})x^{2}+2(4-c_{1}^{2})%
\left( 1-\left\vert x\right\vert ^{2}\right) z, \\
4d_{3}
&=&d_{1}^{3}+2(4-d_{1}^{2})d_{1}y-d_{1}(4-d_{1}^{2})y^{2}+2(4-d_{1}^{2})%
\left( 1-\left\vert y\right\vert ^{2}\right) w,
\end{eqnarray*}%
\begin{equation}
c_{3}-d_{3}=\frac{c_{1}^{3}}{2}+\frac{c_{1}\left( 4-c_{1}^{2}\right) }{2}%
(x+y)-\frac{c_{1}\left( 4-c_{1}^{2}\right) }{2}(x^{2}+y^{2})+\frac{\left(
4-c_{1}^{2}\right) }{2}\left( \left( 1-\left\vert x\right\vert ^{2}\right)
z-\left( 1-\left\vert y\right\vert ^{2}\right) w\right) .  \label{eq17}
\end{equation}%
for some $x,y,$ $z,w$ with $\left\vert x\right\vert \leq 1,\left\vert
y\right\vert \leq 1,\left\vert z\right\vert \leq 1$ and $\left\vert
w\right\vert \leq 1.$Using (\ref{eq16}) and (\ref{eq17}) in (\ref{eq15}),
and applying the triangle inequality we have%
\begin{eqnarray*}
&&\left\vert a_{2}a_{4}-a_{3}^{2}\right\vert =\left\vert -\frac{1}{3}\left(
1-\beta \right) ^{4}c_{1}^{4}+\frac{1}{16}\left( 1-\beta \right)
^{3}c_{1}^{2}(4-c_{1}^{2})(x-y)\right. \\
&&+\frac{1}{6}\left( 1-\beta \right) ^{2}c_{1}\left[ \frac{c_{1}^{3}}{2}+%
\frac{(4-c_{1}^{2})c_{1}}{2}(x+y)-\frac{(4-c_{1}^{2})c_{1}}{4}(x^{2}+y^{2})+%
\frac{(4-c_{1}^{2})}{2}\left( (1-\left\vert x\right\vert
^{2})z-(1-\left\vert y\right\vert ^{2})w\right) \right] \\
&&\left. -\frac{1}{64}\left( 1-\beta \right)
^{2}(4-c_{1}^{2})^{2}(x-y)^{2}\right\vert \\
&\leq &\frac{1}{3}\left( 1-\beta \right) ^{4}c_{1}^{4}+\frac{1}{12}\left(
1-\beta \right) ^{2}c_{1}^{4}+\frac{1}{6}\left( 1-\beta \right)
^{2}c_{1}(4-c_{1}^{2}) \\
&&+\left[ \frac{1}{16}\left( 1-\beta \right) ^{3}c_{1}^{2}(4-c_{1}^{2})+%
\frac{1}{12}\left( 1-\beta \right) ^{2}c_{1}^{2}(4-c_{1}^{2})\right]
(\left\vert x\right\vert +\left\vert y\right\vert ) \\
&&+\left[ \frac{1}{24}\left( 1-\beta \right) ^{2}c_{1}^{2}(4-c_{1}^{2})-%
\frac{1}{12}\left( 1-\beta \right) ^{2}c_{1}(4-c_{1}^{2})\right] (\left\vert
x\right\vert ^{2}+\left\vert y\right\vert ^{2})+\frac{1}{64}\left( 1-\beta
\right) ^{2}(4-c_{1}^{2})^{2}(\left\vert x\right\vert +\left\vert
y\right\vert )^{2}.
\end{eqnarray*}

Since $p\in $ $\mathcal{P},$ so $\left\vert c_{1}\right\vert \leq 2.$
Letting $c_{1}=c,$ we may assume without restriction that $c\in \lbrack
0,2]. $ Thus, for $\lambda =\left\vert x\right\vert \leq 1$ and $\mu
=\left\vert y\right\vert \leq 1$ we obtain%
\begin{equation}
\left\vert a_{2}a_{4}-a_{3}^{2}\right\vert \leq T_{1}+T_{2}(\lambda +\mu
)+T_{3}(\lambda ^{2}+\mu ^{2})+T_{4}(\lambda +\mu )^{2}=F(\lambda ,\mu ) 
\notag
\end{equation}%
where%
\begin{eqnarray*}
T_{1} &=&T_{1}(c)=\frac{\left( 1-\beta \right) ^{2}}{12}\left[ \left(
1+4\left( 1-\beta \right) ^{2}\right) c^{4}-2c^{3}+8c\right] \geq 0,\text{ }
\\
T_{2} &=&T_{2}(c)=\frac{1}{48}\left( 1-\beta \right)
^{2}c^{2}(4-c^{2})(7-3\beta )\geq 0, \\
T_{3} &=&T_{3}(c)=\frac{1}{24}\left( 1-\beta \right) ^{2}c(4-c^{2})(c-2)\leq
0,\text{ } \\
T_{4} &=&T_{4}(c)=\frac{1}{64}\left( 1-\beta \right)
^{2}(4-c_{1}^{2})^{2}\geq 0.
\end{eqnarray*}

Now we need to maximize $F(\lambda ,\mu )$ in the closed square $\mathbb{S}%
=\left\{ (\lambda ,\mu ):0\leq \lambda \leq 1,0\leq \mu \leq 1\right\} .$
Since $T_{3}<0$ and $T_{3}+2T_{4}>0$ for $c\in \lbrack 0,2)$, we conclude
that%
\begin{equation*}
F_{\lambda \lambda }\cdot F_{\mu \mu }-\left( F_{\lambda \mu }\right) ^{2}<0.
\end{equation*}

Thus the function $F$ cannot have a local maximum in the interior of the
square $\mathbb{S}$. Now, we investigate the maximum of $F$ on the boundary
of the square $\mathbb{S}$.

For $\lambda =0$ and $0\leq \mu \leq 1$ $\left( \text{similarly }\mu =0\text{
and }0\leq \lambda \leq 1\right) ,$ we obtain%
\begin{equation*}
F(0,\mu )=G(\mu )=\left( T_{3}+T_{4}\right) \mu ^{2}+T_{2}\mu +T_{1}.
\end{equation*}

i. \textit{The case }$T_{3}+T_{4}\geq 0:$ In this case for $0<\mu <1$ and
any fixed $c$ with $0\leq c<2,$ it is clear that $G^{\prime }(\mu )=2\left(
T_{3}+T_{4}\right) \mu +T_{2}>0,$ that is, $G(\mu )$ is an increasing
function. Hence, for fixed $c\in \lbrack 0,2),$ the maximum of $G(\mu )$
occurs at $\mu =1,$ and%
\begin{equation*}
\max G(\mu )=G(1)=T_{1}+T_{2}+T_{3}+T_{4}.
\end{equation*}

ii. \textit{The case }$T_{3}+T_{4}<0:$ Since $T_{2}+2\left(
T_{3}+T_{4}\right) \geq 0$ for $0<\mu <1$ and any fixed $c$ with $0\leq c<2,$
it is clear that $T_{2}+2\left( T_{3}+T_{4}\right) <2\left(
T_{3}+T_{4}\right) \mu +T_{2}<T_{2}$ and so $G^{\prime }(\mu )>0.$ Hence for
fixed $c\in \lbrack 0,2),$ the maximum of $G(\mu )$ occurs at $\mu =1.$

Also for $c=2$ we obtain%
\begin{equation}
F(\lambda ,\mu )=\frac{4}{3}\left( 1-\beta \right) ^{2}(4\beta ^{2}-8\beta
+5).  \label{eq171}
\end{equation}

Taking into account the value (\ref{eq171}), and the cases \textit{i} and 
\textit{ii}, for $0\leq \mu \leq 1$ and any fixed $c$ with $0\leq c\leq 2$,%
\begin{equation*}
\max G(\mu )=G(1)=T_{1}+T_{2}+T_{3}+T_{4}.
\end{equation*}%
For $\lambda =1$ and $0\leq \mu \leq 1$ $\left( \text{similarly }\mu =1\text{
and }0\leq \lambda \leq 1\right) ,$ we obtain%
\begin{equation*}
F(1,\mu )=H(\mu )=\left( T_{3}+T_{4}\right) \mu ^{2}+\left(
T_{2}+2T_{4}\right) \mu +T_{1}+T_{2}+T_{3}+T_{4}.
\end{equation*}

Similarly to the above cases of $T_{3}+T_{4},$ we get that%
\begin{equation*}
\max H(\mu )=H(1)=T_{1}+2T_{2}+2T_{3}+4T_{4}.
\end{equation*}

Since $G(1)\leq H(1)$ for $c\in \lbrack 0,2],$ $\max F(\lambda ,\mu )=F(1,1)$
on the boundary of the square $\mathbb{S}$. Thus the maximum of $F$ occurs
at $\lambda =1$ and $\mu =1$ in the closed square $\mathbb{S}$.

Let $K:\left[ 0,2\right] \rightarrow 
\mathbb{R}
$ 
\begin{equation}
K(c)=\max F(\lambda ,\mu )=F(1,1)=T_{1}+2T_{2}+2T_{3}+4T_{4}.  \label{eq18}
\end{equation}

Substituting the values of $T_{1},T_{2},T_{3}$ and $T_{4}$ in the function $%
K $ defined by (\ref{eq18}), yield%
\begin{equation*}
K(c)=\frac{\left( 1-\beta \right) ^{2}}{48}\left[ \left( 16\beta
^{2}-26\beta +5\right) c^{4}+24(2-\beta )c^{2}+48\right] .
\end{equation*}

Assume that $K(c)$ has a maximum value in an interior of $c\in \lbrack 0,2]$%
, by elementary calculation we find 
\begin{equation}
K^{\prime }(c)=\frac{\left( 1-\beta \right) ^{2}}{12}\left[ \left( 16\beta
^{2}-26\beta +5\right) c^{3}+12(2-\beta )c\right] .  \label{eq21}
\end{equation}

As a result of some calculations we can do the following examine:

\textbf{Case 1: }Let\textbf{\ }$16\beta ^{2}-26\beta +5\geq 0,$ that is$,$ $%
\beta \in \left[ 0,\frac{13-\sqrt{89}}{16}\right] .$ Therefore $K^{\prime
}(c)>0$ for $c\in (0,2).$ Since $K$ is an increasing function in the
interval $(0,2)$, maximum point of $K$ must be on the boundary of $c\in
\lbrack 0,2],$ that is, $c=2$. Thus, we have%
\begin{equation*}
\underset{0\leq c\leq 2}{\max }K(c)=K(2)=\frac{4}{3}\left( 1-\beta \right)
^{2}\left( 4\beta ^{2}-8\beta +5\right) .
\end{equation*}

\textbf{Case 2: }Let\textbf{\ }$16\beta ^{2}-26\beta +5<0,$ that is$,$ $%
\beta \in \left( \frac{13-\sqrt{89}}{16},1\right) .$ Then $K^{\prime }(c)=0$
implies the real critical point $c_{0_{1}}=0$ or $c_{0_{2}}=\sqrt{\frac{%
-12(2-\beta )}{16\beta ^{2}-26\beta +5}}.$ When $\beta \in \left( \frac{13-%
\sqrt{89}}{16},\frac{29-\sqrt{137}}{32}\right] ,$ we observe that $%
c_{0_{2}}\geq 2$, that is, $c_{0_{2}}$ is out of the interval $(0,2)$.
Therefore the maximum value of $K(c)$ occurs at $c_{0_{1}}=0$ or $%
c=c_{0_{2}} $ which contradicts our assumption of having the maximum value
at the interior point of $c\in \lbrack 0,2]$. Since $K$ is an increasing
function in the interval $(0,2)$, maximum point of $K$ must be on the
boundary of $c\in \lbrack 0,2],$ that is, $c=2$. Thus, we have%
\begin{equation*}
\underset{0\leq c\leq 2}{\max }K(c)=K(2)=\frac{4}{3}\left( 1-\beta \right)
^{2}\left( 4\beta ^{2}-8\beta +5\right) .
\end{equation*}

When $\beta \in \left( \frac{29-\sqrt{137}}{32},1\right) $ we observe that $%
c_{0_{2}}<2$, that is, $c_{0_{2}}$ is interior of the interval $[0,2]$.
Since $K^{\prime \prime }(c_{0_{2}})<0,$ the maximum value of $K(c)$ occurs
at $c=c_{0_{2}}.$ Thus, we have 
\begin{equation*}
\underset{0\leq c\leq 2}{\max }K(c)=K(c_{0_{2}})=K\left( \sqrt{\frac{%
-12(2-\beta )}{16\beta ^{2}-26\beta +5}}\right) =\left( 1-\beta \right)
^{2}\left( \frac{13\beta ^{2}-14\beta -7}{16\beta ^{2}-26\beta +5}\right) .
\end{equation*}

This completes the proof of the Theorem \ref{t1}.
\end{proof}

For $\beta =0$, Theorem \ref{t1} readily yields the following coefficient
estimates for bi-starlike functions.

\begin{corollary}
\label{c1} Let $f(z)$ given by (\ref{eq1}) be in the class $\mathcal{S}%
_{\sigma }^{\ast }.$ Then%
\begin{equation*}
\left\vert a_{2}a_{4}-a_{3}^{2}\right\vert \leq \frac{20}{3}.
\end{equation*}
\end{corollary}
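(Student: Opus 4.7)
The plan is to obtain the corollary as a direct specialization of Theorem \ref{t1} at $\beta=0$, so no new coefficient analysis is required. First, I would identify which branch of the piecewise bound in (\ref{eq5}) applies when $\beta=0$. Since $\sqrt{137}<12$, the threshold satisfies $(29-\sqrt{137})/32 > 17/32 > 0$, so $\beta=0$ lies in the interval $[0,(29-\sqrt{137})/32]$ and the first branch of (\ref{eq5}) is the relevant one.

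Next, I would substitute $\beta=0$ into the expression $\tfrac{4}{3}(1-\beta)^{2}(4\beta^{2}-8\beta+5)$, obtaining $\tfrac{4}{3}\cdot 1\cdot 5=\tfrac{20}{3}$. Combined with the fact that $f\in\mathcal{S}^{\ast}_{\sigma}$ is by definition $f\in\mathcal{S}^{\ast}_{\sigma}(0)$, this immediately yields $|a_{2}a_{4}-a_{3}^{2}|\leq 20/3$.

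There is essentially no obstacle here; the corollary is a one-line evaluation of the main theorem. The only point worth stating carefully is the verification that $\beta=0$ falls in the correct branch of the piecewise formula, which is why I would explicitly record the estimate $(29-\sqrt{137})/32>0$ before performing the substitution.
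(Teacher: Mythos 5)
Your proposal is correct and matches the paper's intent exactly: the corollary is stated as an immediate specialization of Theorem \ref{t1} at $\beta=0$, and your check that $0\leq (29-\sqrt{137})/32$ places $\beta=0$ in the first branch, after which $\tfrac{4}{3}(1)^{2}(5)=\tfrac{20}{3}$ follows. Nothing further is needed.
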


Our second main result for the class $\mathcal{K}_{\sigma }(\beta )$ is
following:

\begin{theorem}
\label{t2} Let $f(z)$ given by (\ref{eq1}) be in the class $\mathcal{K}%
_{\sigma }(\beta ),$ $0\leq \beta <1.$ Then%
\begin{equation}
\left\vert a_{2}a_{4}-a_{3}^{2}\right\vert \leq \frac{\left( 1-\beta \right)
^{2}}{24}\left( \frac{5\beta ^{2}+8\beta -32}{3\beta ^{2}-3\beta -4}\right)
\label{eq22}
\end{equation}
\end{theorem}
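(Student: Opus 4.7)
The plan is to mirror Theorem~\ref{t1} with the coefficient relations adapted to bi-convexity. I would start by writing, for $f \in \mathcal{K}_{\sigma}(\beta)$ with inverse $g = f^{-1}$,
\[
1 + \frac{zf''(z)}{f'(z)} = \beta + (1-\beta)p(z), \qquad 1 + \frac{wg''(w)}{g'(w)} = \beta + (1-\beta)q(w),
\]
with $p,q \in \mathcal{P}$ expanded as in \eqref{eq1*}. Comparing coefficients through order $z^3$ yields two triples of relations; taking sums and differences, as in the starlike case, gives $c_1 = -d_1$, $a_2 = \tfrac12(1-\beta)c_1$, an expression $a_3 = \tfrac14(1-\beta)^2 c_1^2 + \tfrac{1}{12}(1-\beta)(c_2-d_2)$, and a formula for $a_4$ as an explicit linear combination of $(1-\beta)^3 c_1^3$, $(1-\beta)^2 c_1(c_2-d_2)$ and $(1-\beta)(c_3-d_3)$ with small rational coefficients.

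Substituting these into $a_2 a_4 - a_3^2$ and invoking Lemma~\ref{l2} to re-express $c_2 - d_2$ and $c_3 - d_3$ via \eqref{eq16}--\eqref{eq17} in the real parameter $c := c_1 \in [0,2]$ and the auxiliary variables $x,y,z,w$, I would then apply the triangle inequality exactly as in Theorem~\ref{t1} (in particular using $|(1-|x|^2)z - (1-|y|^2)w| \le 2 - |x|^2 - |y|^2$). The result is a majorant
\[
F(\lambda,\mu) = T_1 + T_2(\lambda+\mu) + T_3(\lambda^2+\mu^2) + T_4(\lambda+\mu)^2
\]
on the square $[0,1]^2$, with $T_i = T_i(c)$ explicit. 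I expect $T_3$ to again be a negative multiple of $c(4-c^2)(c-2)$ and the sign conditions $T_3 < 0$, $T_3 + 2T_4 > 0$, $T_2 + 2(T_3 + T_4) \ge 0$ on $[0,2)$ to go through unchanged, so that the square-maximisation argument of Theorem~\ref{t1} delivers $\max F = F(1,1) =: K(c)$.

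After consolidating the contributions, I expect $K$ to collapse to the biquadratic
\[
K(c) = \frac{(1-\beta)^2}{288} \bigl[(3\beta^2 - 3\beta - 4)c^4 + (32 - 12\beta)c^2 + 32\bigr].
\]
Since $3\beta^2 - 3\beta - 4 < 0$ on $[0,1)$, $K$ is concave in $c^2$ and admits a unique positive critical point
\[
c_0^2 = \frac{16 - 6\beta}{4 + 3\beta - 3\beta^2},
\]
which one checks stays in $[5/2,\,4]$ for every $\beta \in [0,1)$; so $c_0 \in [0,2]$ is the global maximiser and, in contrast to Theorem~\ref{t1}, no case-split in $\beta$ is required. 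Using the standard formula $K(c_0) = \tfrac{(1-\beta)^2}{288}\bigl(32 - B^2/(4A)\bigr)$ with $A = 3\beta^2 - 3\beta - 4$ and $B = 32 - 12\beta$, a factor of $48$ cancels from the numerator and one obtains exactly the bound stated in~\eqref{eq22}.

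The principal obstacle is the careful bookkeeping of numerical constants in the $a_4$ expression: the convex coefficient conditions introduce the factor $12 a_4$ in place of the $3 a_4$ of the starlike case, so the constants multiplying $c_1^3$, $c_1(c_2 - d_2)$ and $c_3 - d_3$ all shift, and a single misplaced factor propagates to an incorrect $K(c)$. A secondary point is verifying that the critical point $c_0$ lies in $[0,2]$ uniformly in $\beta$, which is what allows the final bound in~\eqref{eq22} to be given by a single closed-form formula rather than a piecewise expression.
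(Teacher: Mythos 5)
Your proposal is correct and follows essentially the same route as the paper: the same coefficient relations give $a_2=\tfrac12(1-\beta)c_1$, the same $a_3$, $a_4$, the same majorant maximised at $(\lambda,\mu)=(1,1)$, and the same quartic $\Phi(c)=\tfrac{(1-\beta)^2}{288}\bigl[(3\beta^2-3\beta-4)c^4+4(8-3\beta)c^2+32\bigr]$ with interior critical point $c_0^2=\tfrac{2(8-3\beta)}{4+3\beta-3\beta^2}\in[5/2,4]$, yielding \eqref{eq22}. Your observation that no case-split in $\beta$ is needed here (unlike Theorem \ref{t1}) matches the paper's treatment.
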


\begin{proof}
Let $f\in \mathcal{K}_{\sigma }(\beta )\;$and $g=f^{-1}.$\ Then%
\begin{equation}
1+\frac{zf^{\prime \prime }(z)}{f^{\prime }(z)}=\beta +(1-\beta )p(z)\;\text{%
and }1+\frac{wg^{\prime \prime }(w)}{g^{\prime }(w)}=\beta +(1-\beta )q(w)
\label{eq23}
\end{equation}%
where $p(z)=1+c_{1}z+c_{2}z^{2}+...$ and $q(w)=1+d_{1}w+d_{2}w^{2}+...$ in $%
\mathcal{P}$.

Now, equating the coefficients in (\ref{eq23}), we have%
\begin{eqnarray}
2a_{2} &=&(1-\beta )c_{1},  \label{eq25} \\
6a_{3}-4a_{2}^{2} &=&(1-\beta )c_{2},  \label{eq26} \\
12a_{4}-18a_{3}a_{2}+8a_{2}^{3} &=&(1-\beta )c_{3}  \label{eq27}
\end{eqnarray}%
and%
\begin{eqnarray}
-2a_{2} &=&(1-\beta )d_{1},  \label{eq28} \\
8a_{2}^{2}-6a_{3} &=&(1-\beta )d_{2},  \label{eq29} \\
-32a_{2}^{3}+42a_{3}a_{2}-12a_{4} &=&(1-\beta )d_{3}.  \label{eq30}
\end{eqnarray}%
From (\ref{eq25}) and (\ref{eq28}), we arrive at 
\begin{equation}
c_{1}=-d_{1}  \label{eq31}
\end{equation}%
and%
\begin{equation}
a_{2}=\frac{1}{2}(1-\beta )c_{1}.  \label{eq33}
\end{equation}%
Now, from (\ref{eq26}), (\ref{eq29}) and (\ref{eq33}), we get that%
\begin{equation}
a_{3}=\frac{1}{4}\left( 1-\beta \right) ^{2}c_{1}^{2}+\frac{1}{12}\left(
1-\beta \right) \left( c_{2}-d_{2}\right) .  \label{eq34}
\end{equation}%
Also, from (\ref{eq27}) and (\ref{eq30}), we find that 
\begin{equation}
a_{4}=\frac{5}{48}\left( 1-\beta \right) ^{3}c_{1}^{3}+\frac{5}{48}\left(
1-\beta \right) ^{2}c_{1}\left( c_{2}-d_{2}\right) +\frac{1}{24}(1-\beta
)\left( c_{3}-d_{3}\right) .  \label{eq35}
\end{equation}%
Thus, we can easily establish that%
\begin{eqnarray}
&&\left\vert a_{2}a_{4}-a_{3}^{2}\right\vert =\left\vert -\frac{1}{96}\left(
1-\beta \right) ^{4}c_{1}^{4}+\frac{1}{96}\left( 1-\beta \right)
^{3}c_{1}^{2}\left( c_{2}-d_{2}\right) \right.  \notag \\
&&\text{ \ \ \ \ \ \ \ \ \ \ \ \ \ \ }\left. +\frac{1}{48}\left( 1-\beta
\right) ^{2}c_{1}\left( c_{3}-d_{3}\right) -\frac{1}{144}\left( 1-\beta
\right) ^{2}\left( c_{2}-d_{2}\right) ^{2}\right\vert .  \label{eq36}
\end{eqnarray}

Using (\ref{eq16}) and (\ref{eq17}) in (\ref{eq36}), we have%
\begin{eqnarray*}
&&\left\vert a_{2}a_{4}-a_{3}^{2}\right\vert =\left\vert -\frac{1}{96}\left(
1-\beta \right) ^{4}c_{1}^{4}+\frac{1}{192}\left( 1-\beta \right)
^{3}c_{1}^{2}(4-c_{1}^{2})(x-y)\right. \\
&&+\frac{1}{48}\left( 1-\beta \right) ^{2}c_{1}\left[ \frac{c_{1}^{3}}{2}+%
\frac{(4-c_{1}^{2})c_{1}}{2}(x+y)-\frac{(4-c_{1}^{2})c_{1}}{4}(x^{2}+y^{2})+%
\frac{(4-c_{1}^{2})}{2}\left( (1-\left\vert x\right\vert
^{2})z-(1-\left\vert y\right\vert ^{2})w\right) \right] \\
&&\left. -\frac{1}{288}\left( 1-\beta \right)
^{2}(4-c_{1}^{2})^{2}(x-y)^{2}\right\vert \\
&\leq &\frac{1}{96}\left( 1-\beta \right) ^{4}c_{1}^{4}+\frac{1}{96}\left(
1-\beta \right) ^{2}c_{1}^{4}+\frac{1}{48}\left( 1-\beta \right)
^{2}c_{1}(4-c_{1}^{2}) \\
&&+\left[ \frac{1}{192}\left( 1-\beta \right) ^{3}c_{1}^{2}(4-c_{1}^{2})+%
\frac{1}{96}\left( 1-\beta \right) ^{2}c_{1}^{2}(4-c_{1}^{2})\right]
(\left\vert x\right\vert +\left\vert y\right\vert ) \\
&&+\left[ \frac{1}{192}\left( 1-\beta \right) ^{2}c_{1}^{2}(4-c_{1}^{2})-%
\frac{1}{96}\left( 1-\beta \right) ^{2}c_{1}(4-c_{1}^{2})\right] (\left\vert
x\right\vert ^{2}+\left\vert y\right\vert ^{2})+\frac{1}{576}\left( 1-\beta
\right) ^{2}(4-c_{1}^{2})^{2}(\left\vert x\right\vert +\left\vert
y\right\vert )^{2}.
\end{eqnarray*}%
Since $p\in $ $\mathcal{P},$ so $\left\vert c_{1}\right\vert \leq 2.$ Taking 
$c_{1}=c,$ we may assume without restriction that $c\in \lbrack 0,2].$ Thus,
for $\lambda =\left\vert x\right\vert \leq 1$ and $\mu =\left\vert
y\right\vert \leq 1$ we obtain%
\begin{equation}
\left\vert a_{2}a_{4}-a_{3}^{2}\right\vert \leq M_{1}+M_{2}(\lambda +\mu
)+M_{3}(\lambda ^{2}+\mu ^{2})+M_{4}(\lambda +\mu )^{2}=\Psi (\lambda ,\mu )
\notag
\end{equation}%
where%
\begin{eqnarray*}
M_{1} &=&M_{1}(c)=\frac{\left( 1-\beta \right) ^{2}}{96}\left[ \left(
1+\left( 1-\beta \right) ^{2}\right) c^{4}-2c^{3}+8c\right] \geq 0,\text{ }
\\
M_{2} &=&M_{2}(c)=\frac{1}{192}\left( 1-\beta \right)
^{2}c^{2}(4-c^{2})(3-\beta )\geq 0, \\
M_{3} &=&M_{3}(c)=\frac{1}{192}\left( 1-\beta \right)
^{2}c(4-c^{2})(c-2)\leq 0,\text{ } \\
M_{4} &=&M_{4}(c)=\frac{1}{576}\left( 1-\beta \right)
^{2}(4-c_{1}^{2})^{2}\geq 0.
\end{eqnarray*}

Therefore we need to maximize $\Psi (\lambda ,\mu )$ in the closed square $%
\mathbb{S}=\left\{ (\lambda ,\mu ):0\leq \lambda \leq 1,0\leq \mu \leq
1\right\} .$ To show that the maximum of $\Psi $\ we can follow the maximum
of $F$ in the Theorem \ref{t1}. Thus the maximum of $\Psi $ occurs at $%
\lambda =1$ and $\mu =1$ in the closed square $\mathbb{S}$. Let $\Phi :\left[
0,2\right] \rightarrow 
\mathbb{R}
$ defined by 
\begin{equation}
\Phi (c)=\max \Psi (\lambda ,\mu )=\Psi (1,1)=M_{1}+2M_{2}+2M_{3}+4M_{4}.
\label{eq37}
\end{equation}

Substituting the values of $M_{1},M_{2},M_{3}$ and $M_{4}$ in the function $%
\Phi $ given by (\ref{eq37}), yield%
\begin{equation*}
\Phi (c)=\frac{\left( 1-\beta \right) ^{2}}{288}\left[ \left( 3\beta
^{2}-3\beta -4\right) c^{4}+4(8-3\beta )c^{2}+32\right] .
\end{equation*}

Assume that $\Phi (c)$ has a maximum value in an interior of $c\in \lbrack
0,2]$, by elementary calculation we find 
\begin{equation*}
\Phi ^{\prime }(c)=\frac{\left( 1-\beta \right) ^{2}}{72}\left[ \left(
3\beta ^{2}-3\beta -4\right) c^{3}+2(8-3\beta )c\right] .
\end{equation*}

Setting $\Phi ^{\prime }(c)=0,$ since $0<c<2,$ and $3\beta ^{2}-3\beta -4<0$
and $8-3\beta >0$ for every $\beta \in \left[ 0,1\right) $ we have the real
critical poin $c_{0_{3}}=\sqrt{\frac{2(3\beta -8)}{3\beta ^{2}-3\beta -4}}.$
Since $c_{0_{3}}\leq 2$ for every $\beta \in \left[ 0,1\right) $ and so $%
\Phi ^{\prime \prime }(c_{0_{3}})<0$, the maximum value of $\Phi (c)$
corresponds to $c=c_{0_{3}},$ that is,%
\begin{equation*}
\underset{0<c<2}{\max }\Phi (c)=\Phi (c_{0_{3}})=\Phi \left( \sqrt{\frac{%
2(3\beta -8)}{3\beta ^{2}-3\beta -4}}\right) =\frac{\left( 1-\beta \right)
^{2}}{24}\left( \frac{5\beta ^{2}+8\beta -32}{3\beta ^{2}-3\beta -4}\right) .
\end{equation*}%
On the other hand, 
\begin{equation*}
\Phi \left( 0\right) =\frac{\left( 1-\beta \right) ^{2}}{9}\text{ and }\Phi
\left( 2\right) =\frac{\left( 1-\beta \right) ^{2}}{6}\left( \beta
^{2}-2\beta +2\right) .
\end{equation*}%
Consequently, since $\Phi \left( 0\right) <\Phi \left( 2\right) \leq \Phi
(c_{0_{3}})$ we obtain$\underset{0\leq c<\leq 2}{\max }\Phi (c)=\Phi
(c_{0_{3}})$.

This completes the proof of the Theorem \ref{t2}.
\end{proof}

For $\beta =0$, Theorem \ref{t2} readily yields the following coefficient
estimates for bi-convex functions.

\begin{corollary}
\label{c2}Let $f(z)$ given by (\ref{eq1}) be in the class $\mathcal{K}%
_{\sigma }.$ Then%
\begin{equation*}
\left\vert a_{2}a_{4}-a_{3}^{2}\right\vert \leq \frac{1}{3}.
\end{equation*}
\end{corollary}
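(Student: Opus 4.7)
The plan is to obtain Corollary \ref{c2} as an immediate specialization of Theorem \ref{t2} to the case $\beta = 0$, since $\mathcal{K}_\sigma = \mathcal{K}_\sigma(0)$ by the definitions given in the introduction. So no independent argument is needed; I only need to substitute and simplify.

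Concretely, I would start from the bound established in Theorem \ref{t2},
\begin{equation*}
\left\vert a_{2}a_{4}-a_{3}^{2}\right\vert \leq \frac{(1-\beta)^2}{24}\left(\frac{5\beta^2+8\beta-32}{3\beta^2-3\beta-4}\right),
\end{equation*}
and evaluate the right-hand side at $\beta = 0$. The prefactor becomes $(1-0)^2/24 = 1/24$, the numerator of the fraction reduces to $-32$, and the denominator reduces to $-4$, so the fraction equals $8$. Multiplying, one obtains $1/24 \cdot 8 = 1/3$, which is precisely the claimed bound.

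There is no real obstacle here; the only thing worth double-checking is that the critical point $c_{0_3} = \sqrt{2(3\beta-8)/(3\beta^2-3\beta-4)}$ identified in the proof of Theorem \ref{t2} remains the global maximizer when $\beta = 0$ (it gives $c_{0_3} = \sqrt{16/4} = 2$, lying at the boundary), so that the value obtained by formal substitution really is $\max_{0 \le c \le 2} \Phi(c)$ in this degenerate case. Since the theorem's proof explicitly verifies $\Phi(0) < \Phi(2) \le \Phi(c_{0_3})$ for every $\beta \in [0,1)$, the substitution is legitimate and the corollary follows.
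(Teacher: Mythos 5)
Your proposal is correct and matches the paper exactly: the corollary is obtained by setting $\beta=0$ in the bound of Theorem \ref{t2}, giving $\frac{1}{24}\cdot\frac{-32}{-4}=\frac{1}{3}$. Your extra observation that $c_{0_{3}}=2$ in this degenerate case (so the maximizer sits on the boundary) is a sensible sanity check, already covered by the theorem's verification that $\Phi(0)<\Phi(2)\leq\Phi(c_{0_{3}})$ for all $\beta\in[0,1)$.
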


\end{document}